\newtheorem{theorem}{Theorem}[section]
\newtheorem{lemma}[theorem]{Lemma}
\newtheorem{corollary}[theorem]{Corollary}
\newtheorem{problem}[theorem]{Problem}
\theoremstyle{definition}
\newtheorem{remark}[theorem]{Remark}
\begin{document}

\title[Additive primitive length]
{Additive primitive length\\
in relatively free algebras}
\author{Vesselin Drensky}
\date{}
\address{Institute of Mathematics and Informatics,
Bulgarian Academy of Sciences,
Acad. G. Bonchev Str., Block 8,
1113 Sofia, Bulgaria}
\email{drensky@math.bas.bg}
\subjclass[2010]
{13F20, 13P05, 14E07, 14R10, 17B30, 17B40.}
\keywords{primitive elements, automorphisms, polynomial algebras, free metabelian Lie algebras.}

\maketitle

\begin{abstract}
The {\it additive primitive length} of an element $f$ of a relatively free algebra $F_d({\mathfrak V})$ in a variety of algebras $\mathfrak V$
is equal to the minimal number $\ell$ such that $f$ can be presented as a sum of $\ell$ primitive elements. We give an upper bound
for the additive primitive length of the elements in the $d$-generated polynomial algebra over a field of characteristic 0, $d>1$.
The bound depends on $d$ and on the degree of the element.
We show that if the field has more than two elements, then the additive primitive length
in free $d$-generated nilpotent-by-abelian Lie algebras
is bounded by 5 for $d=3$ and by 6 for $d>3$. If the field has two elements only, then our bound are 6 for $d=3$ and 7 for $d>3$.
This generalizes a recent result of Ela Ayd{\i}n for two-generated free metabelian Lie algebras.
In all cases considered in the paper the presentation of the elements as sums of primitive can be found effectively in polynomial time.

\end{abstract}

\section*{Introduction}
Let $F_d({\mathfrak V})$ be the $d$-generated relatively free algebra in a variety of algebras $\mathfrak V$ over a field $K$.
An element $u$ in $F_d({\mathfrak V})$ is {\it primitive} if it is an image of $x_1$ under an automorphism of $F_d({\mathfrak V})$,
where $X_d=\{x_1,\ldots,x_d\}$ is a free generating system of $F_d({\mathfrak V})$.
The {\it additive primitive length} $\text{plength}_+(f)$ of an element $f$ of $F_d({\mathfrak V})$
is equal to the minimal $\ell$ such that $f$ can be presented as a sum of $\ell$ primitive elements.
If such a presentation is impossible, we define $\text{plength}_+(f)=\infty$.
The {\it additive primitive width} $\text{pwidth}_+(F_d({\mathfrak V}))$ of $F_d({\mathfrak V})$
is equal to the maximum of $\text{plength}_+(f)$, $f\in F_d({\mathfrak V})$.
The notions of additive primitive length and width were introduced by Ayd{\i}n in her recent paper \cite{A}
by analogy with the notions of primitive length and width for free groups defined and studied by Bardakov, Shpilrain, and Tolstykh \cite{BST}.
Ayd{\i}n \cite{A} calculated the additive primitive length of the elements
of the two-generated free metabelian (i.e., solvable of class 2) Lie algebra $F_2({\mathfrak A}^2)$ over a field $K$ of characteristic 0.
In particular she described the elements of infinite length in $F_2({\mathfrak A}^2)$.
Ayd{\i}n also asked the problem whether the elements of $F_d({\mathfrak A}^2)$, $d>2$, are of finite length and, if this is true,
what is $\text{pwidth}_+(F_d({\mathfrak A}^2))$ and whether it depends on $d$.

The first topic of the present paper is the study of the additive primitive length of the elements of the polynomial algebra $K[X_d]$
over a field $K$ of characteristic 0.
The case $d=1$ is trivial because the only primitive elements are of the form $\beta +\gamma x_1$, $\beta,\gamma\in K$, $\gamma\not=0$.
Hence if $\deg(f)>1$, $f\in K[X_1]$, then $\text{plength}_+(f)=\infty$. Our main result in this direction is that if $d>1$ and $\deg(f)=n>1$,
then $\displaystyle \text{plength}_+(f)\leq \binom{n+d-1}{d-1}$. In our presentation of $f$ as a sum of primitive elements
we use tame primitive elements only, i.e., images of $x_1$ under tame automorphisms of $K[X_d]$.
Unfortunately, our proof does not work when the base field is of positive characteristic.
We show that for $K=\mathbb Q$ the presentation of $f$ can be found effectively in polynomial time.
Hence the problem to find the presentation belongs to the class {\bf P}
of decision problems which can be solved by a deterministic Turing machine using a polynomial amount of computation time.
It is interesting to know whether $\text{plength}_+(f)$ is bounded
when $f\in K[X_d]$. There are some reasons to believe that, at least for $K[X_2]$, the additive primitive
length of the polynomials is not bounded, but we cannot prove this.

Then we consider the additive primitive length for the elements of the free metabelian Lie algebra $F_d({\mathfrak A}^2)$ over an arbitrary field $K$.
In the case $d=2$ Ayd{\i}n \cite{A} uses the description of the automorphisms of $F_2({\mathfrak A}^2)$ given by Shmelkin \cite{Sh}
when $\text{char}(K)=0$. But the same description of $\text{Aut}(F_2({\mathfrak A}^2))$ is true for any field $K$, see \cite{D}.
We consider the case $d>2$ and $f\in F_d({\mathfrak A}^2)$, $\deg(f)>1$. For $d=3$ we show that $\text{plength}_+(f)\leq 5$
if $K$ has more than two elements and $\text{plength}_+(f)\leq 6$ if $\vert K\vert=2$. If $d>3$, then
$\text{plength}_+(f)\leq 6$ when $\vert K\vert>2$ and $\text{plength}_+(f)\leq 7$ when $\vert K\vert=2$.
The primitive elements which appear as summands in our presentation of $f$ are images of $x_1$
under tame and inner automorphisms. Finally, we consider the relatively free algebras $F_d({\mathfrak N}_c{\mathfrak A})$ of the variety
${\mathfrak N}_c{\mathfrak A}$ of (nilpotent of class $c$)-by-abelian Lie algebras, $c>1$. As in the case of groups, see Bryant and Gupta \cite{BG},
an endomorphism of $F_d({\mathfrak N}_c{\mathfrak A})$ is an automorphism if and only if it induces an automorphism on the free metabelian Lie algebra
$F_d({\mathfrak A}^2)$, see \cite{D}. This easily implies that the additive primitive length of $f\in F_d({\mathfrak N}_c{\mathfrak A})$
is the same as of the image of $f$ in $F_d({\mathfrak A}^2)$. Again, the presentation of the elements of $F_d({\mathfrak N}_c{\mathfrak A})$
as a sum of primitive can be found effectively in polynomial time.

For a background on automorphisms of polynomial algebras, free groups, and free algebras we refer to the books by van den Essen \cite{E}
and by Mikhalev, Shpilrain, and Yu \cite{MSY}, and on varieties of Lie algebras to the book by Bahturin \cite{Bah}.

\section{Polynomial algebras}
In this section we fix a field $K$ of characteristic 0 and work in the polynomial algebra $K[X_d]$, $d>1$.
Recall that the group of {\it tame automorphisms} of $K[X_d]$ is generated by the {\it affine automorphisms} $\varphi$ defined by
\[
\varphi(x_j)=\beta_j+\sum_{i=1}^d\gamma_{ij}x_i,\quad j=1,\ldots,d,
\]
where $\beta_j,\gamma_{ij}\in K$ and the matrix $(\gamma_{ij})$ is invertible ($\varphi$ is called {\it linear} if all $\beta_j$ are equal to 0),
and the {\it triangular automorphisms} $\vartheta$ defined by
\[
\vartheta(x_j)=\gamma_jx_j+v_j(x_{j+1},\ldots,x_d),\quad j=1,\ldots,d,
\]
where $0\not=\gamma_j\in K$ and $v_j(x_{j+1},\ldots,x_d)\in K[x_{j+1},\ldots,x_d]$.

\begin{lemma}\label{lemma primitive polynomials}
Let $\alpha,\beta,\gamma,\xi_p\in K$, $p=2,\ldots,n$, $\alpha,\gamma\not=0$,
and let
\begin{equation}\label{linear element}
s(\alpha)=x_1+\sum_{i=2}^d\alpha^{(n+1)^{i-2}}x_i.
\end{equation}
Then the polynomial
\begin{equation}\label{primitive polynomial}
u=\beta+\gamma x_1+\sum_{p=2}^n\xi_ps^p(\alpha)
\end{equation}
is primitive.
\end{lemma}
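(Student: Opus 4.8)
The plan is to prove the stronger statement that $u$ is \emph{tamely} primitive, by exhibiting an explicit tame automorphism $\theta$ of $K[X_d]$ with $\theta(u)=x_1$; then $u=\theta^{-1}(x_1)$, and since $\theta$ is assembled from four explicitly given tame automorphisms, this also makes transparent that the presentation can be found effectively (in polynomial time), which is the running theme of the paper.

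The only feature of $s(\alpha)$ that the argument uses is that, putting $L=\sum_{i=2}^{d}\alpha^{(n+1)^{i-2}}x_i$, we have $s(\alpha)=x_1+L$, where $L$ is a linear form in $x_2,\dots,x_d$ not involving $x_1$, and $L\neq 0$ because $\alpha\neq 0$. (The precise shape of the exponents $(n+1)^{i-2}$ is irrelevant for this lemma; it will matter only when the lemma is applied simultaneously to several values of $\alpha$, in order to keep the resulting monomials distinct.) Accordingly, the first step is the triangular automorphism $\theta_1\colon x_1\mapsto x_1-L$, $x_i\mapsto x_i$ $(i\geq 2)$, which satisfies $\theta_1(s(\alpha))=x_1$ and hence $\theta_1(u)=q(x_1)-\gamma L$, where $q(t)=\beta+\gamma t+\sum_{p=2}^{n}\xi_p t^{p}\in K[t]$. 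So after one step $u$ has been turned into a one-variable polynomial in $x_1$ corrected by the nonzero linear ``error'' $-\gamma L$.

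It then remains to straighten this out by routine linear algebra. Since $\gamma\alpha\neq 0$, the coefficient of $x_2$ in $\gamma L$ is nonzero, so there is a linear automorphism $\theta_2$ fixing $x_1$ and sending $\gamma L$ to $x_2$, whence $\theta_2\theta_1(u)=q(x_1)-x_2$. Applying the elementary automorphism $\theta_3\colon x_2\mapsto x_2+q(x_1)$ (fixing the other generators) gives $\theta_3\theta_2\theta_1(u)=-x_2$, and a final linear automorphism $\theta_4$ with $\theta_4(-x_2)=x_1$ yields $\theta(u)=x_1$ for $\theta=\theta_4\theta_3\theta_2\theta_1$, which proves the lemma.

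I do not expect a genuine obstacle: the substance is the observation in the second paragraph that the single substitution $\theta_1$ replaces every power $s^{p}(\alpha)$ by $x_1^{p}$ at the sole cost of the harmless term $-\gamma L$, after which only bookkeeping remains. Two small points must be checked: first, $\theta_3$ is not triangular for the ordering $x_1<\dots<x_d$, but conjugating it by the transposition $x_1\leftrightarrow x_2$ produces the triangular automorphism $x_1\mapsto x_1+q(x_2)$, so $\theta_3$ does lie in the tame group; second, the hypothesis $\gamma\neq 0$ is genuinely needed, since if $\gamma=0$ then $\theta_1(u)=q(x_1)\in K[x_1]$ has degree $\geq 2$ and is not primitive.
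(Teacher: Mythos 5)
Your proof is correct and is essentially the paper's argument run in reverse: the paper exhibits $u=\varphi\circ\vartheta(x_1)$ directly, with $\vartheta$ the triangular automorphism $\vartheta(x_1)=\beta+\gamma x_1+\sum_{p=2}^n\xi_p x_2^p$ and $\varphi$ the linear automorphism $\varphi(x_2)=s(\alpha)$, whereas you factor the inverse direction into four tame steps reducing $u$ to $x_1$. The underlying observation is the same in both versions --- all that matters is that $s(\alpha)=x_1+L$ with $L$ a nonzero linear form in $x_2,\ldots,x_d$ --- so both arguments establish that $u$ is tamely primitive.
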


\begin{proof}
We define the triangular automorphism $\vartheta$ and the affine automorphism $\varphi$ by
\[
\vartheta(x_1)=\beta+\gamma x_1+\sum_{p=2}^n\xi_px_2^p,\vartheta(x_j)=x_j,\quad j=2,\ldots,d,
\]
\[
\varphi(x_1)=x_1,\varphi(x_2)=s(\alpha)=x_1+\sum_{i=2}^d\alpha^{(n+1)^{i-2}}x_i,\varphi(x_j)=x_j,\quad j=3,\ldots,d.
\]
Since $u=\varphi\circ\vartheta(x_1)$, it is primitive.
\end{proof}

The following theorem is the first main result in our paper.

\begin{theorem}\label{plength in polynomial algebras}
Let the base field $K$ be of characteristic $0$ and let $d>1$.
If $f\in K[X_d]$ and $\deg(f)=n>1$,
then
\[
\text{\rm plength}_+(f)\leq \binom{n+d-1}{d-1}.
\]
\end{theorem}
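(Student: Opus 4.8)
The plan is to realize $f$ as a sum of $N:=\binom{n+d-1}{d-1}$ primitive elements of the shape produced by Lemma~\ref{lemma primitive polynomials}, after a preliminary linear normalization. Write $f=f_0+f_1+\cdots+f_n$ with $f_p$ the homogeneous component of degree $p$. A primitive $u=\beta+\gamma x_1+\sum_{p=2}^n\xi_p s^p(\alpha)$ of Lemma~\ref{lemma primitive polynomials} has degree-one component equal to $\gamma x_1$, so any sum of such elements can only contribute a multiple of $x_1$ in degree one. Hence the first step: if $f_1\neq 0$, choose a linear automorphism $\psi$ of $K[X_d]$ with $\psi(x_1)$ a nonzero scalar multiple of $f_1$ (possible, completing $f_1$ to a basis of the linear forms) and replace $f$ by $g:=\psi^{-1}(f)$; then $g$ again has degree $n$, its degree-one part lies in $Kx_1$, and it suffices to write $g$ as a sum of $N$ primitives, applying $\psi$ afterwards. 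If $f_1=0$ we simply put $g:=f$.

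Second, I would choose the interpolation nodes. Note that $N$ is exactly the dimension of the space of homogeneous polynomials of degree $n$, which dominates the dimension $\binom{p+d-1}{d-1}$ of the degree-$p$ component for every $p\leq n$. I claim one can pick $\alpha_1,\ldots,\alpha_N\in K$ pairwise distinct and nonzero so that, for every $p\in\{2,\ldots,n\}$, the polynomials $s^p(\alpha_1),\ldots,s^p(\alpha_N)$ span the homogeneous degree-$p$ component of $K[X_d]$. In the monomial basis, the coefficient of $x_1^{a_1}\cdots x_d^{a_d}$ in $s^p(\alpha_k)$ is $\binom{p}{a_1,\ldots,a_d}\,\alpha_k^{E(a)}$ with $E(a)=\sum_{i=2}^d(n+1)^{i-2}a_i$. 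Since $a_i\leq p\leq n$, the tuple $(a_2,\ldots,a_d)$ is a string of base-$(n+1)$ digits, so $E(a)$ is injective on $\{a:|a|=p\}$; together with $\binom{p}{a_1,\ldots,a_d}\neq 0$ in characteristic $0$, the maximal minors of the $N\times\binom{p+d-1}{d-1}$ coefficient matrix are, up to nonzero scalars, generalized Vandermonde determinants $\det(\alpha_{k_i}^{E(a_j)})$ with distinct exponents, hence not identically zero as polynomials in the $\alpha_k$. The locus of $(\alpha_1,\ldots,\alpha_N)$ failing the spanning condition for some $p$, or having a repeated or zero coordinate, is therefore a proper Zariski-closed subset of $K^N$, which can be avoided since $K$ is infinite.

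Third, I would assemble the primitives. For each $p\in\{2,\ldots,n\}$ solve the linear system $\sum_{k=1}^N\xi_{p,k}\,s^p(\alpha_k)=g_p$ for scalars $\xi_{p,k}$, solvable by the spanning property; choose $\gamma_1,\ldots,\gamma_N\in K\setminus\{0\}$ with sum equal to the coefficient of $x_1$ in $g_1$, and $\beta_1,\ldots,\beta_N\in K$ with sum $g_0$ — possible since $N=\binom{n+d-1}{d-1}\geq 3$ and $K$ is infinite. Setting $u_k=\beta_k+\gamma_k x_1+\sum_{p=2}^n\xi_{p,k}s^p(\alpha_k)$, Lemma~\ref{lemma primitive polynomials} shows each $u_k$ is primitive, since its parameters satisfy $\alpha_k\neq 0$ and $\gamma_k\neq 0$; and by construction $\sum_{k=1}^N u_k=g$. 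Applying $\psi$ yields $f=\sum_{k=1}^N\psi(u_k)$, a sum of $N$ primitive elements, so $\text{plength}_+(f)\leq\binom{n+d-1}{d-1}$.

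The main obstacle, and the point where the exponents $(n+1)^{i-2}$ in $s(\alpha)$ do their work, is the second step: one needs a \emph{single} tuple of nodes serving all degrees $2\leq p\leq n$ at once, which rests on the exponents $E(a)$ being pairwise distinct — the base-$(n+1)$ bookkeeping — so that the simultaneous non-vanishing of the relevant Vandermonde-type minors is a genuinely nonempty open condition. A secondary point, handled by the initial linear change of coordinates, is that primitives of the given form carry nothing beyond $Kx_1$ in degree one; since $\psi$ is linear, this keeps the whole construction within tame automorphisms.
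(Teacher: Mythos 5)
Your proposal is correct and follows essentially the same route as the paper: normalize the linear part by a linear automorphism, use the primitives of Lemma~\ref{lemma primitive polynomials} built on the powers $s^p(\alpha_k)$, exploit the base-$(n+1)$ exponents to make the exponents $E(a)$ distinct within each degree, and solve one linear system per degree with a single set of $N=\binom{n+d-1}{d-1}$ nodes, using characteristic $0$ for the nonvanishing multinomial coefficients. The only (immaterial) difference is that you justify the choice of the nodes $\alpha_1,\ldots,\alpha_N$ by a Zariski-genericity argument over the infinite field, whereas the paper fixes them so that all the relevant generalized Vandermonde (Schur-type) matrices are invertible.
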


\begin{proof} We write $f$ in the form
\[
f=f_0+f_1+\cdots+f_n,
\]
where
\[
f_p=\sum_{\vert a\vert=p} \mu_ax_1^{a_1}\cdots x_d^{a_d},\quad \mu_a\in K,\vert a\vert=a_1+\cdots+a_d,
\]
is the homogeneous component of $f$ of degree $p$.
If $f_1\not=0$ there is a linear automorphism $\psi$ which sends $f_1$ to $x_1$. Since the automorphisms do not change the additive primitive length
of the elements, we can work with $\psi(f)$ instead with $f$ and may assume that
$f=\beta+x_1+f_2+\cdots+f_n$, $\beta\in K$.
If $f_1=0$, then $f=\beta+f_2+\cdots+f_n$. Hence in both cases we can work with $f$ presented in the form
\[
f=\beta+\delta x_1+f_2+\cdots+f_n,\quad \beta\in K,\delta=0,1.
\]
Since the base field is of characteristic 0 we can choose
$\displaystyle N=\binom{n+d-1}{d-1}$ pairwise different nonzero elements $\alpha_1,\ldots,\alpha_N\in K$
such that for a fixed $k$ all powers $1,\alpha_k,\alpha_k^2,\ldots,\alpha_k^{n(n+1)^{d-2}}$ are pairwise different.
Additionally, we require that the matrices
\begin{equation}\label{Schur matrix}
\left(\begin{matrix}
\alpha_1^{p_1}&\ldots&\alpha_N^{p_1}\\
\vdots&\ddots&\vdots\\
\alpha_1^{p_N}&\ldots&\alpha_N^{p_N}\\
\end{matrix}\right)
\end{equation}
are invertible for all nonnegative integers $p_1<\cdots<p_N\leq n(n+1)^{d-2}$.
Such matrices appear in the determinantal presentation of Schur functions, see e.g. \cite{Mc}.
We want to present $f$ as a sum of the primitive elements $u_1,\ldots,u_N$ of the form (\ref{primitive polynomial}), where
\[
u_1=\beta+\xi_{11}x_1+\sum_{p=2}^n\xi_{1p}s^p(\alpha_1),
\]
\[
u_k=\xi_{k1}x_1+\sum_{p=2}^n\xi_{kp}s^p(\alpha_k),\quad k=2,\ldots,N,
\]
and $s(\alpha_k)$ is defined in (\ref{linear element}) for $\alpha=\alpha_k$.
Here the coefficients $\xi_{kp}$ are unknown elements of $K$.
We can choose all $\xi_{k1}\in K$ in such a way that they are different from 0 and $\xi_{11}+\cdots+\xi_{N1}=\delta$,
i.e., the affine component $\beta+\delta x_1$ of $f$ is equal to the affine component of $u_1+\cdots+u_N$.
By Lemma \ref{lemma primitive polynomials} the polynomials $u_1,\ldots,u_N$ are primitive.
Now, let $1<p\leq n$. Then for the homogeneous component $f_p$ of degree $p$ of $f$ we obtain
\[
f_p=\sum_{\vert a\vert=p}\mu_ax_1^{a_1}\cdots x_d^{a_d}
=\sum_{k=1}^N\xi_{kp}s^p(\alpha_k)
\]
\[
=\sum_{k=1}^N\xi_{kp}\sum_{\vert a\vert=p}\binom{a_1+\cdots+a_d}{a_1,\ldots,a_d}\alpha_k^{P_a}x_1^{a_1}\cdots x_d^{a_d},
\]
where
\begin{equation}\label{numerical system}
P_a=a_1+a_2(n+1)+a_3(n+1)^2+\cdots+a_d(n+1)^{d-2}
\end{equation}
and $\displaystyle \binom{a_1+\cdots+a_d}{a_1,\ldots,a_d}=\frac{(a_1+\cdots+a_d)!}{a_1!\cdots a_d!}$
is the multinomial coefficient. Comparing the coefficients of the $\displaystyle N_p=\binom{p+d-1}{d-1}$ monomials $x_1^{a_1}\cdots x_d^{a_d}$,
we obtain the linear system
\begin{equation}\label{linear system}
\binom{a_1+\cdots+a_d}{a_1,\ldots,a_d}\sum_{k=1}^N\xi_{kp}\alpha_k^{P_a}=\mu_a,\quad \vert a\vert=p,
\end{equation}
with $N_p$ equations and $\displaystyle N=\binom{n+d-1}{d-1}$ unknowns.
Since $a_j\leq p=\deg(f_p)\leq\deg(f)=n$, $j=1,\ldots,d$, the integers $P_a$ from (\ref{numerical system})
have presentations in the numerical system with base $n+1$
and are pairwise different. Hence the rows of the matrix of the system (\ref{linear system}) are rows
of a matrix of the form (\ref{Schur matrix}). Since the matrix (\ref{Schur matrix}) is invertible, its rows are linearly independent and the system
(\ref{linear system}) has a solution. This implies that $f$ can be presented as a sum of $N$ primitive polynomials and
$\displaystyle \text{plength}_+(f)\leq\binom{n+d-1}{d-1}$.
\end{proof}

\begin{corollary}\label{complexity for polynomials}
If $f\in {\mathbb Q}[X_d]$, $\deg(f)>1$, we can find effectively a presentation of $f$ as a sum of primitive polynomials in polynomial time.
\end{corollary}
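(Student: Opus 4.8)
The corollary asks to make the construction in Theorem \ref{plength in polynomial algebras} effective and to bound the running time by a polynomial. The plan is to trace through the proof of the theorem and at each step observe that the required data is either explicitly given or obtainable by standard polynomial-time linear algebra over $\mathbb Q$. First I would address the reduction to the form $f=\beta+\delta x_1+f_2+\cdots+f_n$: computing the homogeneous decomposition is free, and when $f_1\neq 0$ we need a linear automorphism $\psi$ sending $f_1$ to $x_1$; since $f_1=\sum\gamma_ix_i$ is a nonzero linear form, we complete the vector $(\gamma_1,\dots,\gamma_d)$ to an invertible matrix over $\mathbb Q$ (Gaussian elimination), invert it, and apply it to $f$. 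Substituting a linear change of variables into a polynomial of degree $n$ in $d$ variables produces $O\!\left(\binom{n+d-1}{d-1}\cdot d^{\,?}\right)$ arithmetic operations — in any case a fixed polynomial in the input size, which includes the number of monomials and the bit-lengths of the coefficients.

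Next I would handle the choice of the scalars $\alpha_1,\dots,\alpha_N$ with $N=\binom{n+d-1}{d-1}$. Over $\mathbb Q$ one may simply take $\alpha_k=k$ for $k=1,\dots,N$ (or any $N$ distinct positive integers); then the matrices in (\ref{Schur matrix}) are Vandermonde-type with distinct positive nodes raised to distinct exponents, hence invertible, and the powers $1,\alpha_k,\dots,\alpha_k^{n(n+1)^{d-2}}$ are automatically pairwise distinct since $\alpha_k>1$. No search is needed, so this step is immediate. One should note that the exponents $P_a$ range up to $n(n+1)^{d-2}$, so the entries $\alpha_k^{P_a}$ have bit-length polynomial in the input (the exponent is at most the degree bound, which is encoded in the problem, and $\log\alpha_k\le\log N$); thus all numbers stay of polynomially bounded size.

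Then I would describe solving the linear systems (\ref{linear system}). For each degree $p$ with $1<p\le n$ we have an $N_p\times N$ system with $N_p=\binom{p+d-1}{d-1}\le N$; its coefficient matrix has rows drawn from an invertible matrix of the form (\ref{Schur matrix}), so the rows are linearly independent and a solution exists. A consistent linear system over $\mathbb Q$ with rational entries of bounded bit-size is solvable in polynomial time by Gaussian elimination (one standard reference suffices), and the output coefficients $\xi_{kp}$ again have polynomially bounded bit-length. The affine coefficients $\xi_{k1}$ are chosen freely, nonzero, summing to $\delta$ — e.g.\ $\xi_{k1}=1$ for $k<N$ and $\xi_{N1}=\delta-(N-1)$ — which is trivial. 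Finally, assembling the primitive polynomials $u_k=\xi_{k1}x_1+\sum_p\xi_{kp}s^p(\alpha_k)$ and, if $\psi$ was applied, pulling everything back by $\psi^{-1}$, is again polynomial work. Summing over the at most $n-1$ systems and the constant number of auxiliary steps gives an overall polynomial-time algorithm, which places the problem in the class \textbf{P}.

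I do not expect a genuine obstacle here: the theorem's proof is already constructive, and the only points needing care are (i) confirming that a suitable linear automorphism $\psi$ and its inverse are computable in polynomial time, and (ii) verifying that the integers and rationals produced — in particular the powers $\alpha_k^{P_a}$ with exponents up to $n(n+1)^{d-2}$ — have bit-length bounded by a polynomial in the size of the input $f$, so that each arithmetic operation itself costs polynomial time. Both are routine, so the bulk of the write-up will be bookkeeping of these complexity estimates rather than any new mathematical idea.
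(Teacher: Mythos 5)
Your proposal is correct and follows essentially the same route as the paper: reduce $f$ by a linear automorphism, fix the scalars $\alpha_k$, and solve the at most $n-1$ linear systems, checking that all quantities have polynomially bounded bit-size. The one point you defer to ``a standard reference'' --- polynomial-time exact solution of the rational linear systems --- is precisely where the paper is most explicit, invoking Bareiss's fraction-free variant of Gaussian elimination (arithmetic complexity ${\mathcal O}(N^3)$, bit complexity ${\mathcal O}(N^5)$) to rule out exponential growth of intermediate entries, while your explicit choice $\alpha_k=k$ (justified by the invertibility of the resulting generalized Vandermonde matrices) fills in a detail the paper leaves implicit.
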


\begin{proof}
Following the proof of Theorem \ref{plength in polynomial algebras}, we can find the presentation of $f$, $\deg(f)=n>1$, as a sum of primitive polynomials
in three steps:

(1) We find a linear automorphism $\psi$ of ${\mathbb Q}[X_d]$ which send $f_1$ to $\delta x_1$;

(2) We compute $\psi(f)$;

(3) We solve $n-1$ linear systems with $\displaystyle \leq N=\binom{n+d-1}{d-1}$ equations and $N$ unknowns.

Obviously steps (1) and (2) can be performed in polynomial time. In the third step we shall solve the systems
applying the Gaussian elimination. It is well known that for a system with $\leq N$ equations and $N$ unknowns
we need ${\mathcal O}(N^3)$ arithmetic operations. The number of arithmetic operations
measures the computational complexity when the time for each arithmetic operation is approximately the same.
This happens when the coefficients of the system are represented by floating-point numbers or when we work in a finite field.
For our purposes we need to work with rational numbers represented exactly.
In this case the intermediate entries may become exponentially large and the bit complexity is exponential.
But there is a variant of Gaussian elimination due to Bareiss \cite{Bar} which avoids the exponential growth of the intermediate entries.
It has the same arithmetic complexity ${\mathcal O}(N^3)$, but its bit complexity is ${\mathcal O}(N^5)$.
\end{proof}

\begin{remark}
(1) In Theorem \ref{plength in polynomial algebras} we consider the case $\deg(f)>1$ because the case $\deg(f)\leq 1$ is trivial.
If $f\in K[X_d]$ and $\deg(f)=1$, then $f$ is primitive and $\text{plength}_+(f)=1$; if $\deg(f)=0$,
then $f=\beta\in K$, $f=(\beta+x_1)+(-x_1)$ and $\text{plength}_+(f)=2$.

(2) The proof of Theorem \ref{plength in polynomial algebras} does not hold for fields of positive characteristic $p$
because some of the multinomial coefficients become 0 modulo $p$.
\end{remark}

\section{Metabelian Lie algebras}
In this section we consider the free metabelian Lie algebra $F_d({\mathfrak A}^2)$, $d>2$, over an arbitrary field $K$.
It is well known, see e.g., \cite{Bah}, that as a vector space the commutator ideal $F_d'({\mathfrak A}^2)$ has a basis
\[
[x_{i_1},x_{i_2},x_{i_3},\ldots,x_{i_n}]=[x_{i_1},x_{i_2}]\text{ad}(x_{i_3})\cdots\text{ad}(x_{i_n}),
\]
where $i_1>i_2\leq i_3\leq\cdots\leq i_n$, $[v_1,v_2,\ldots,v_{n-1},v_n]=[[v_1,v_2,\ldots,v_{n-1}],v_n]$, and $v\text{ad}(w)=[v,w]$.
For the Lie algebra $F_d({\mathfrak A}^2)$ the group of tame automorphisms is generated by the linear and the triangular automorphisms
which are defined in the same way as in the case of polynomial algebras.
Additionally, we shall consider the {\it inner automorphisms} of $F_d({\mathfrak A}^2)$ defined by
\[
\exp(\text{ad}v):x_j\to x_j+[x_j,v],\quad j=1,\ldots,d,v\in F_d'({\mathfrak A}^2).
\]
(By the theorem of Bahturin and Nabiyev \cite{BN} the inner automorphisms of $F_d({\mathfrak A}^2)$ are {\it wild}, i.e., not tame.)

\begin{lemma}\label{primitive Lie elements}
The following elements of $F_d({\mathfrak A}^2)$ are primitive:
\begin{equation}\label{triangular primitive}
u_1=\gamma x_1+v(x_2,\ldots,x_d),\quad 0\not=\gamma\in K, v(x_2,\ldots,x_d)\in F_d({\mathfrak A}^2);
\end{equation}
\begin{equation}\label{inner primitive}
u_2=\gamma x_j+[w,x_j],\quad n\geq 2, 0\not=\gamma\in K, w\in F_d'({\mathfrak A}^2);
\end{equation}
\begin{equation}\label{quadratic primitive}
u_3=\sum_{j=1}^d\alpha_jx_j+\sum_{j=2}^d\beta_j[x_j,x_1],\quad \alpha_j,\beta_j\in K,
\end{equation}
if the elements $\displaystyle \sum_{j=2}^d\alpha_jx_j, \sum_{j=2}^d\beta_jx_j$, and $x_1$ are linearly independent.
\end{lemma}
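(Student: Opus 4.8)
The plan is to exhibit, for each of the three elements, an explicit automorphism of $F_d(\mathfrak{A}^2)$ sending $x_1$ (or a suitable generator) to it, built from the tame and inner automorphisms described above.

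For $u_1 = \gamma x_1 + v(x_2,\dots,x_d)$ the argument is immediate: this is precisely the image of $x_1$ under a triangular automorphism $\vartheta$ with $\vartheta(x_1) = \gamma x_1 + v(x_2,\dots,x_d)$ and $\vartheta(x_j) = x_j$ for $j \geq 2$, since $v$ involves only $x_2,\dots,x_d$. For $u_2 = \gamma x_j + [w,x_j]$ with $w \in F_d'(\mathfrak{A}^2)$, I would first apply the inner automorphism $\exp(\operatorname{ad} w^{-1})$... more carefully: note $\exp(\operatorname{ad} v)$ sends $x_j \mapsto x_j + [x_j,v] = x_j - [v,x_j]$. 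So take $v = -w$; then $\exp(\operatorname{ad}(-w))$ sends $x_j \mapsto x_j + [w,x_j]$. Composing with the linear (diagonal) automorphism scaling $x_j$ by $\gamma$, and using that $w$ lies in the commutator ideal so that the scaling interacts predictably with $[w,x_j]$ — here one should be slightly careful about the order of composition and whether $w$ gets rescaled, but since we are free to choose $w$, we can absorb any such change — we obtain $u_2$ as an image of $x_j$, hence a primitive element (any free generator is primitive, and we may permute generators by a linear automorphism to move $x_j$ to $x_1$).

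For $u_3 = \sum_{j=1}^d \alpha_j x_j + \sum_{j=2}^d \beta_j [x_j,x_1]$, the hypothesis is that $\sum_{j=2}^d \alpha_j x_j$, $\sum_{j=2}^d \beta_j x_j$, and $x_1$ are linearly independent in the span of $x_1,\dots,x_d$. The plan is: first apply a linear automorphism $\psi$ that maps $x_1 \mapsto x_1$, maps $x_2 \mapsto \sum_{j=2}^d \beta_j x_j$ (possible after arranging for $x_1, \sum \beta_j x_j$ to be part of a basis — the linear independence assumption guarantees $\sum \beta_j x_j \notin K x_1$), and so on, so that the problem reduces to showing $\alpha_1 x_1 + (\text{linear combination of } x_2,\dots,x_d) + \beta[x_2,x_1]$ is primitive for a suitable scalar $\beta$. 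Then apply the inner automorphism $\exp(\operatorname{ad}(\beta x_2))$, which sends $x_1 \mapsto x_1 + [x_1, \beta x_2] = x_1 - \beta[x_2,x_1]$; composing with a further triangular automorphism adjusting the coefficient of $x_1$ and adding the remaining linear part $\sum_{j=2}^d$-terms (which is allowed since those involve only $x_2,\dots,x_d$) yields $u_3$ up to sign and the linear change of variables. The linear independence of the three listed vectors is exactly what is needed to guarantee that the linear change of variables placing $x_1$, the $\alpha$-direction, and the $\beta$-direction into a coherent basis actually exists.

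The main obstacle is the third element: one must organize the composition of a linear automorphism, an inner automorphism $\exp(\operatorname{ad} v)$ with $v$ a linear combination of the $x_j$'s, and a triangular automorphism so that their combined effect on $x_1$ is exactly $u_3$ — checking that the quadratic part $[x_j,x_1]$ produced by the inner automorphism is not disturbed by the triangular step (it is not, since $[x_2,x_1]$ lies in $F_d'(\mathfrak{A}^2)$ and a triangular automorphism fixing $x_2,\dots,x_d$ and rescaling/shifting $x_1$ changes $[x_2,x_1]$ only by the scalar $\gamma_1$, which we can normalize), and that the linear independence hypothesis is precisely the obstruction-free condition for the initial linear change of variables. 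The other two cases are essentially bookkeeping with the definitions of triangular and inner automorphisms.
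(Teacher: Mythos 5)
Your treatment of $u_1$ and $u_2$ matches the paper's: $u_1$ is the image of $x_1$ under a triangular automorphism, and $u_2$ is the image of $x_j$ under an inner automorphism composed with a scaling. The rescaling issue you flag for $u_2$ is genuine but harmless and is resolved exactly as you suggest; the cleanest choice (the paper's) is the automorphism acting as $x_i\mapsto \gamma x_i$ followed by $\exp\left(-\frac{1}{\gamma}\text{ad}\,w\right)$, which sends $x_j$ to $\gamma x_j+[w,x_j]$ on the nose because $w$ lies in $F_d'({\mathfrak A}^2)$ and is therefore annihilated by a second application of $\text{ad}$.

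The argument for $u_3$, however, has a genuine gap: you invoke ``the inner automorphism $\exp(\text{ad}(\beta x_2))$''. The inner automorphisms of $F_d({\mathfrak A}^2)$ are defined only for $v\in F_d'({\mathfrak A}^2)$; only then is $(\text{ad}\,v)^2=0$, so that the exponential reduces to the finite map $u\mapsto u+[u,v]$ and is an automorphism. For a linear element such as $\beta x_2$ the operator $\text{ad}(\beta x_2)$ is not nilpotent (the elements $[x_1,x_2,x_2,\ldots,x_2]$ are nonzero in every degree), so the exponential series is not defined in the algebra, and the lemma is stated over an arbitrary field, where dividing by factorials is not even available. Truncating does not help: the endomorphism $x_i\mapsto x_i+\beta[x_i,x_2]$ is not an automorphism. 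For instance, for $d=2$ and $\beta\neq 0$ its image has commutator part $K[t_1,t_2]\,(1+\beta t_2)[x_1,x_2]$, where $t_i$ denotes $\text{ad}(x_i)$, and since $1+\beta t_2$ is not a unit of $K[t_1,t_2]$, the element $x_1$ is not in the image. So the quadratic term $[x_2,x_1]$ cannot be produced by your proposed map. The repair is the route the paper takes and which you almost set up with your linear change of variables: put $y_1=\sum_{j=1}^d\alpha_jx_j$, $y_2=\sum_{j=2}^d\beta_jx_j$, $y_3=x_1$. The hypothesis makes $y_1,y_2,y_3$ linearly independent, hence they extend to a basis $Y_d=\{y_1,\ldots,y_d\}$ of the span of $X_d$, i.e., to a free generating set of $F_d({\mathfrak A}^2)$, and then $u_3=y_1+[y_2,y_3]$ is the image of $y_1$ under the triangular automorphism with respect to $Y_d$ that fixes $y_2,\ldots,y_d$; no inner automorphism with a linear $v$ is needed anywhere.
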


\begin{proof}
The elements $u_1$ from (\ref{triangular primitive}) and $u_2$ from (\ref{inner primitive}) are primitive:
$u_1$ is the image of $x_1$ under a triangular automorphism and $u_2$ is an image of $x_j$ under the automorphism
$\displaystyle \gamma\exp\left(-\frac{1}{\gamma}\text{ad}w\right)$.
Finally, since the elements
\[
y_1=\sum_{j=1}^d\alpha_jx_j=\alpha_1x_1+\sum_{j=2}^d\alpha_jx_j, y_2=\sum_{j=2}^d\beta_jx_j,y_3=x_1
\]
are linearly independent we can extend them with elements $y_4,\ldots,y_d$ to a basis $Y_d=\{y_1,\ldots,y_d\}$ of the vector space
spanned by $X_d$. Then $u_3$ from (\ref{quadratic primitive}) has the form $u_3=y_1+[y_2,y_3]$. In this way $u_3$ is an image of $y_1$ under a triangular
automorphism with respect to the free generating system $Y_d$ of $F_d({\mathfrak A}^2)$ and hence is primitive.
\end{proof}

\begin{theorem}\label{plength in metabelian Lie algebras}
Let $f\in F_d({\mathfrak A}^2)$. Then for $d=3$
\[
\text{\rm plength}_+(f)\begin{cases}
\leq 5,\text{ if }\vert K\vert>2,\\
\leq 6, \text{ if }\vert K\vert=2.\\
\end{cases}
\]
When $d>3$, then
\[
\text{\rm plength}_+(f)\begin{cases}
\leq 6,\text{ if }\vert K\vert>2,\\
\leq 7, \text{ if }\vert K\vert=2.\\
\end{cases}
\]
\end{theorem}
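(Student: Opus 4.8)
The plan is to decompose an arbitrary $f\in F_d({\mathfrak A}^2)$ into a bounded number of summands, each of which is either primitive outright or is made primitive after absorbing a correction term, using the three families of primitive elements supplied by Lemma~\ref{primitive Lie elements}. Write $f=f_1+f_{\ge 2}$, where $f_1=\sum_{j=1}^d\alpha_j x_j$ is the linear part and $f_{\ge 2}\in F_d'({\mathfrak A}^2)$ lies in the commutator ideal. Using the basis $[x_{i_1},\ldots,x_{i_n}]$ with $i_1>i_2\le i_3\le\cdots$, further split $f_{\ge 2}=g+h$, where $g$ collects the basis monomials of the form $[x_i,x_1]$ (the ``degree-two in the first slot, tail trivial'' part) and $h$ is everything else. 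The element $h$ has the feature that in each of its basis monomials $[x_{i_1},x_{i_2},x_{i_3},\ldots,x_{i_n}]$ either $n\ge 3$ or $i_2\ge 2$; the point of isolating it is that $h$ involves no $x_1$ in a ``dangerous'' position, so $h=v(x_2,\ldots,x_d)$ plus a piece of the form $[w,x_1]$ with $w\in F_d'$, hence $h$ is a sum of two primitive elements via~(\ref{triangular primitive}) and~(\ref{inner primitive}).

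Next I would handle $f_1+g$. Here $g=\sum_{j=2}^d\beta_j[x_j,x_1]$, so $f_1+g$ is exactly of the shape~(\ref{quadratic primitive}), and it is a single primitive element \emph{provided} the three vectors $\sum_{j\ge 2}\alpha_j x_j$, $\sum_{j\ge 2}\beta_j x_j$, $x_1$ are linearly independent. When that independence fails one must perturb: subtract and add a suitable primitive of type~(\ref{triangular primitive}) or~(\ref{inner primitive}), i.e. write $f_1+g=(f_1+g-u)+u$ where $u$ is chosen so that $f_1+g-u$ meets the independence condition of~(\ref{quadratic primitive}); this costs one extra summand. The bookkeeping of exactly when the independence can fail — when $\alpha_2=\cdots=\alpha_d=0$, or when the $\beta$-vector is proportional to the $\alpha$-vector, or when $d$ is too small to find room — is where the case split on $d=3$ versus $d>3$ and on $|K|=2$ versus $|K|>2$ enters. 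For $d=3$ there are only two coordinates $x_2,x_3$ to play with, so genericity is harder to arrange and one may need an additional correction; for $|K|=2$ one cannot rescale freely and the scalars $\gamma$ in Lemma~\ref{primitive Lie elements} are forced to be $1$, which again can cost one more summand because a single subtract-and-add may not suffice to reach general position.

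Assembling: $f=(f_1+g)+h$ decomposes as (at most $1$ or $2$ primitives for $f_1+g$) plus ($2$ primitives for $h$), and one must check the arithmetic gives the claimed totals. A cleaner route, which I expect is the one actually used, is to peel off primitives one at a time: first use a type-(\ref{quadratic primitive}) element to kill $f_1$ together with the $[\,\cdot\,,x_1]$-part, reducing to an element of $F_d'$ all of whose basis monomials avoid $x_1$ in the first two slots; then use type-(\ref{triangular primitive}) to absorb the part living in $K[x_2,\ldots,x_d]$-generated commutators; then use one or two inner automorphisms of type~(\ref{inner primitive}), applied at different generators $x_j$, to clear the remaining ``$[w,x_j]$'' pieces — noting that $[w,x_j]$ ranges over a large subspace of $F_d'$ as $w$ and $j$ vary. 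The number of inner-automorphism summands needed is exactly what differs between $d=3$ (one generator fewer to exploit) and $d>3$, and between the two field sizes.

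The main obstacle will be the last step: showing that the ``residual'' element of $F_d'({\mathfrak A}^2)$ left after the first few subtractions is a sum of the advertised number of elements of the forms $[w,x_j]+\gamma x_j$ and $v(x_2,\ldots,x_d)$. Concretely one must verify that every element of the commutator ideal whose leading-slot indices are all $\ge 2$ can be written as $v(x_2,\ldots,x_d)+\sum_{j}[w_j,x_j]$ with a bounded number of terms $w_j\in F_d'$, and track precisely how the bound degrades when $|K|=2$ (so all the $\gamma$'s must equal $1$ and the linear parts of the $u_2$'s cannot be tuned away independently) and when $d=3$ (fewer generators, so fewer independent $\mathrm{ad}(x_j)$ operators and a tighter linear-independence budget for~(\ref{quadratic primitive})). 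I would organize this as a short sequence of lemmas on spanning properties of $\{[w,x_j]:w\in F_d',\ j\}$ inside $F_d'({\mathfrak A}^2)$, and then the theorem follows by counting summands in each of the four cases.
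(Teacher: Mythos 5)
Your first decomposition rests on a false claim. You assert that $h$ (all basis monomials except the quadratic ones $[x_j,x_1]$) ``involves no $x_1$ in a dangerous position'' and can therefore be written as $v(x_2,\ldots,x_d)$ plus a single piece of the form $[w,x_1]$, $w\in F_d'$. This fails already in degree $3$ for $d=3$: the span of $\{[w,x_1]:w\in F_3'\}$ in degree $3$ is generated by $[x_2,x_1,x_1]$, $[x_3,x_1,x_1]$ and $[[x_3,x_2],x_1]=[x_3,x_1,x_2]-[x_2,x_1,x_3]$, so a basis monomial such as $[x_2,x_1,x_2]$ lies in $h$ but is not of the form $[w,x_1]+v(x_2,x_3)$. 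Monomials with $x_1$ in the second slot and later entries $\geq 2$ are exactly the dangerous ones: they force the use of inner-type primitives $\gamma x_j+[w,x_j]$ at \emph{several different} generators $x_j$, not just at $x_1$, and this is what drives the count in the theorem (your route-1 totals of $3$--$4$ summands, well below the stated bounds, should have been a warning). Your second, ``cleaner'' sketch does have the right overall shape — it is essentially the strategy of the paper: normalize $f$ by a linear automorphism to the form (\ref{simple form of f}), spend one triangular primitive $\xi x_1+v(x_2,\ldots,x_d)$, one quadratic primitive (\ref{quadratic primitive}) for $\delta x_1+\sum\beta_j[x_j,x_1]$, and inner/triangular primitives for the degree $\geq 3$ part with $x_1$ in the second slot — but everything that makes the theorem true is left unproved and is acknowledged by you as ``the main obstacle.''

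Concretely, two ingredients are missing. First, the splitting of $\sum\mu_i[x_{i_1},x_1,x_{i_3},\ldots,x_{i_n}]$, $n\geq 3$, into a \emph{bounded} number of admissible pieces: for $d=3$ one groups by the last letter into three pieces $[w_1,x_1]+[w_2,x_2]+[w_3,x_3]$ (giving $1+3+1=5$), while for $d>3$ one uses four pieces built around $x_d$ and $x_{d-1}$ — two inner pieces $[w_1,x_d]$, $[w_2,x_{d-1}]$ and two pieces $w_3$, $w_4$ not involving $x_{d-1}$, resp.\ $x_d$, which become triangular-type primitives $\eta_{d-1}x_{d-1}+w_3$, $\eta_d x_d+w_4$ (giving $1+4+1=6$); note this makes $d=3$ the case with the \emph{smaller} bound, the opposite of your heuristic that $d=3$ should cost more. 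Second, the bookkeeping of the linear parts: every primitive used carries a nonzero linear term, and these must cancel so that the linear component equals $\delta x_1$; this produces the linear systems (\ref{system for d=3}) and (\ref{system for d>3}), which must be solved subject to the nonvanishing constraints of Lemma \ref{primitive Lie elements} and the linear-independence condition in (\ref{quadratic primitive}). It is the solvability of these constrained systems that yields the exact constants $5$ and $6$ for $\vert K\vert>2$, and the impossibility of satisfying all constraints over the two-element field that forces splitting the quadratic primitive into two summands, giving $6$ and $7$ when $\vert K\vert=2$. Your proposal gestures at both points (``spanning lemmas,'' ``the case split enters here'') but proves neither, so as it stands it does not establish the stated bounds.
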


\begin{proof}
As in the polynomial case in Theorem \ref{plength in polynomial algebras} we can apply a linear automorphism to bring $f$ in the form
\begin{equation}\label{simple form of f}
f=\delta x_1+\sum_{j=2}^d\beta_j[x_j,x_1]+\sum\mu_i[x_{i_1},x_1,x_{i_3},\ldots,x_{i_n}]+v(x_2,\ldots,x_d),
\end{equation}
where $\delta=0,1$, $\beta_j,\mu_i\in K$, $i_1>1\leq i_3\leq\cdots\leq i_n$, $n\geq 3$, $v(x_2,\ldots,x_d)\in F_d'({\mathfrak A}^2)$.

First, let $d=3$. Then in (\ref{simple form of f}) we present $\sum\mu_i[x_{i_1},x_1,x_{i_3},\ldots,x_{i_n}]$ as a sum of three parts
\[
[w_1,x_1]=\sum\mu_{i}^{(1)}[x_{i_1},x_1,x_{i_3},\ldots,x_{i_{n-1}},x_1],
\]
\[
[w_2,x_2]=\sum\mu_{i}^{(2)}[x_{i_1},x_1,x_{i_3},\ldots,x_{i_{n-1}},x_2],
\]
\[
[w_3,x_3]=\sum\mu_{i}^{(3)}[x_{i_1},x_1,x_{i_3},\ldots,x_{i_{n-1}},x_3].
\]
In this way $f$ has the form
\[
f=\delta x_1+(\beta_2[x_2,x_1]+\beta_3[x_3,x_1])+[w_1,x_1]+[w_2,x_2]+[w_3,x_3]+v(x_2,x_3).
\]
We want to present it as a sum of the following five primitive elements
\[
u_1=\xi x_1+v(x_2,x_3),
\]
\[
u_2^{(1)}=\xi_1 x_1+[w_1,x_1],u_2^{(2)}=\xi_2 x_2+[w_2,x_2],u_2^{(3)}=\xi_3 x_3+[w_3,x_3],
\]
\[
u_3=(\zeta_1 x_1+\zeta_2 x_2+\zeta_3 x_3)+(\beta_2[x_2,x_1]+\beta_3[x_3,x_1]),
\]
with unknown coefficients $\xi,\xi_1,\xi_2,\xi_3,\zeta_1,\zeta_2,\zeta_3\in K$.
We require $\xi,\xi_1,\xi_2,\xi_3,\zeta_2,\zeta_3\not=0$.
(If $v(x_2,x_3)$ or some $w_i$ is equal to zero, then $u_1$ or $u_2^{(i)}$ is still a primitive
element.) If $\beta_2x_2+\beta_3x_3\not=0$,
then we need the linear independence of $\zeta_2 x_2+\zeta_3 x_3$ and $\beta_2x_2+\beta_3x_3$.
If $\beta_2x_2+\beta_3x_3=0$, then $\zeta_1 x_1+\zeta_2 x_2+\zeta_3 x_3$ may be any. In the latter case, if $\zeta_1=\zeta_2=\zeta_3=0$, then
simply $u_3$ does not participate in the presentation of $f$.
The components of $f$ and $u_1+u_2^{(1)}+u_2^{(2)}+u_2^{(3)}+u_3$ in $F_3'({\mathfrak A}^2)$ coincide.
Comparing the linear components we obtain the system
\begin{equation}\label{system for d=3}
\xi+\xi_1+\xi_2+\xi_3+\zeta_1=\delta,\xi_2+\zeta_2=0,\xi_3+\zeta_3=0.
\end{equation}
This system always has a solution of nonzero $\xi,\xi_1,\xi_2,\xi_3,\zeta_2,\zeta_3$ and some $\zeta_1$.
When $\beta_2x_2+\beta_3x_3\not=0$ and the field $K$ has more than two elements we can vary $\zeta_2,\zeta_3$ to guarantee
the linear independence of $\zeta_2x_2+\zeta_3x_3$ and $\beta_2x_2+\beta_3x_3$. Hence $\text{plength}_+(f)\leq 5$.
If $\vert K\vert=2$ we can add one more primitive summand: When the only solution for $\beta_2$ and $\beta_3$ of the system (\ref{system for d=3})
implies that $\zeta_2x_2+\zeta_3x_3$ and $\beta_2x_2+\beta_3x_3$ coincide, we split $u_3$ in two parts $u_3=u_3'+u_3''$, where
\[
u_3'=(\zeta_1 x_1+\zeta_2' x_2+\zeta_3' x_3)+(\beta_2[x_2,x_1]+\beta_3[x_3,x_1]),u_3''=\zeta_2'' x_2+\zeta_3'' x_3,
\]
$\zeta_2'+\zeta_2''=\zeta_2$, $\zeta_3'+\zeta_3''=\zeta_3$, to make $\zeta_2'x_2+\zeta_3'x_3$ and $\beta_2x_2+\beta_3x_3$ different which
gives that $u_3'$ is primitive.

Now, let $d>3$. In (\ref{simple form of f}) we present $\sum\mu_i[x_{i_1},x_1,x_{i_3},\ldots,x_{i_n}]$ as a sum of four parts
\[
[w_1,x_d],\quad w_1=\sum\mu_i^{(1)}[x_d,x_1,x_{i_3},\ldots,x_{i_{n-1}}],
\]
\[
[w_2,x_{d-1}],\quad w_2=\sum\mu_i^{(2)}[x_d,x_1,x_{i_3},\ldots,x_{i_{n-1}}],
\]
\[
w_3=\sum\mu_i^{(3)}[x_d,x_1,x_{i_3},\ldots,x_{i_n}],\quad i_n<d-1,
\]
\[
w_4=\sum\mu_i^{(4)}[x_{i_1},x_1,x_{i_3},\ldots,x_{i_n}],\quad i_1<d,i_n<d-1.
\]
We want to present $f$ as a sum of six primitive elements
\[
u_1^{(1)}=\xi x_1+v(x_2,\ldots,x_d),u_1^{(2)}=\eta_{d-1}x_{d-1}+w_3,u_1^{(3)}=\eta_dx_d+w_4,
\]
\[
u_2^{(1)}=\xi_dx_d+[w_1,x_d],u_2^{(2)}=\xi_{d-1}x_{d-1}+[w_2,x_{d-1}],
\]
\[
u_3=(\zeta_1 x_1+\zeta_{d-1} x_{d-1}+\zeta_d x_d)+\sum_{j=2}^d\beta_j[x_j,x_1],
\]
where $\xi,\eta_{d-1},\eta_d,\xi_{d-1},\xi_d\not=0$ and if some $\beta_2,\ldots,\beta_d$ is different from 0, then
$\zeta_{d-1} x_{d-1}+\zeta_dx_d$ and $\displaystyle \sum_{j=2}^d\beta_jx_j$ are linearly independent.
(If $\beta_2=\cdots=\beta_d=0$, then it is also allowed $u_3=0$.)
Since $i_1>1\leq i_2\leq\cdots\leq i_n$, the generator $x_{d-1}$ does not participate in the expression of $w_3$.
Similarly, $x_d$ does not participate in $w_4$.
By Lemma \ref{primitive Lie elements} all summands $u_1^{(1)},u_1^{(2)},u_1^{(3)},u_2^{(1)},u_2^{(2)},u_3$ are primitive.
We still have to arrange the equality of the linear components. This means to solve the system
\begin{equation}\label{system for d>3}
\xi+\zeta_1=\delta,\quad \eta_{d-1}+\xi_{d-1}+\zeta_{d-1}=0,\quad \eta_d+\xi_d+\zeta_d=0
\end{equation}
obtained comparing the coefficients of $x_1,x_{d-1},x_d$. As in the case of polynomial algebras when $\vert K\vert>2$ we can find
a solution of the system (\ref{system for d>3}) which satisfies all the requirements for being different from 0 and for linear independence.
If $\vert K\vert=2$ we have to add one more linear primitive element to arrange the required restrictions.
\end{proof}

\begin{remark}
As in Corollary \ref{complexity for polynomials}, when $K=\mathbb Q$ in Theorem \ref{plength in metabelian Lie algebras}
we can find the presentation of $f\in F_d({\mathfrak A}^2)$ as a sum of primitive elements in polynomial time:
Repeating the steps of the proof of Corollary \ref{complexity for polynomials} we have to perform steps (1) and (2) only
and then to solve the system (\ref{system for d=3}) when $d=3$ or (\ref{system for d>3}) when $d>3$.
\end{remark}

\begin{corollary}\label{nilpotent-by-abelian Lie algebras}
Let $F_d({\mathfrak N}_c{\mathfrak A})$ be the relatively free algebra of rank $d>1$ of the variety
${\mathfrak N}_c{\mathfrak A}$ of {\rm (}nilpotent of class $c${\rm )}-by-abelian Lie algebras, $c>1$.
Then the additive primitive length of $f\in F_d({\mathfrak N}_c{\mathfrak A})$
is the same as of the image of $f$ in $F_d({\mathfrak A}^2)$.
\end{corollary}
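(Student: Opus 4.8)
The plan is to exploit the natural surjection $\pi\colon F_d(\mathfrak{N}_c\mathfrak{A})\to F_d(\mathfrak{A}^2)$, whose kernel $I$ is the verbal (fully invariant) ideal generated by the value $[[x_1,x_2],[x_3,x_4]]$, so that $\pi$ carries the free generators to the free generators and $I\subseteq F_d'(\mathfrak{N}_c\mathfrak{A})$. Being verbal, $I$ is stable under every endomorphism of $F_d(\mathfrak{N}_c\mathfrak{A})$, hence every endomorphism $\varphi$ of $F_d(\mathfrak{N}_c\mathfrak{A})$ descends to an endomorphism $\bar\varphi$ of $F_d(\mathfrak{A}^2)$ with $\pi\circ\varphi=\bar\varphi\circ\pi$. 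The two facts I would isolate first are: (a) if $\varphi$ is an automorphism of $F_d(\mathfrak{N}_c\mathfrak{A})$ then $\bar\varphi$ is an automorphism of $F_d(\mathfrak{A}^2)$, which is immediate; and (b) conversely, by the result of \cite{D} (the Lie analogue of Bryant--Gupta \cite{BG}), an endomorphism $\varphi$ of $F_d(\mathfrak{N}_c\mathfrak{A})$ is an automorphism as soon as $\bar\varphi$ is one. From (a) it follows at once that $\pi$ sends primitive elements to primitive elements. From (b) it follows that \emph{every} $\pi$-preimage of a primitive element is primitive: given $\bar u=\psi(\bar x_1)$ with $\psi\in\mathrm{Aut}(F_d(\mathfrak{A}^2))$ and any lift $u\in\pi^{-1}(\bar u)$, I would define $\varphi$ on the free generators by $\varphi(x_1)=u$ and by letting $\varphi(x_j)$ be an arbitrary lift of $\psi(\bar x_j)$ for $j\ge2$; then $\bar\varphi=\psi$ is an automorphism, so $\varphi$ is an automorphism and $u=\varphi(x_1)$ is primitive.

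With (a) and (b) in hand the equality of lengths follows in both directions. For $\mathrm{plength}_+(\pi(f))\le\mathrm{plength}_+(f)$: if $f=u_1+\cdots+u_\ell$ with each $u_i$ primitive, apply $\pi$ to obtain $\pi(f)=\pi(u_1)+\cdots+\pi(u_\ell)$ with each $\pi(u_i)$ primitive by (a). For the reverse inequality: starting from a shortest presentation $\pi(f)=\bar u_1+\cdots+\bar u_\ell$ with each $\bar u_i$ primitive in $F_d(\mathfrak{A}^2)$, I would pick arbitrary lifts $u_2,\ldots,u_\ell$ of $\bar u_2,\ldots,\bar u_\ell$ and set $u_1:=f-(u_2+\cdots+u_\ell)$. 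Then $f=u_1+\cdots+u_\ell$ by construction, while $\pi(u_1)=\pi(f)-(\bar u_2+\cdots+\bar u_\ell)=\bar u_1$, so $u_1$ is a lift of the primitive element $\bar u_1$ and hence primitive by (b); this yields $\mathrm{plength}_+(f)\le\ell$. If $\pi(f)$ has infinite additive primitive length, the first inequality forces the same for $f$, so the equality persists in the extended sense; the case $c=1$ is trivial since then $\mathfrak{N}_1\mathfrak{A}=\mathfrak{A}^2$.

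I do not expect a real obstacle here: the entire content is carried by the automorphism-lifting statement of \cite{D}, and the only mildly delicate point is to arrange that the lifts $u_1,\ldots,u_\ell$ of the $\bar u_i$ sum \emph{exactly} to $f$ rather than merely congruently modulo $I$, which is dealt with by taking $u_1$ to be the remainder $f-(u_2+\cdots+u_\ell)$ and using that every lift of a primitive element is primitive. The same bookkeeping also makes the presentation effective: one simply lifts the presentation of $\pi(f)$ produced in the proof of Theorem~\ref{plength in metabelian Lie algebras}, and forming the remainder $u_1$ costs only linearly many arithmetic operations, so the whole procedure still runs in polynomial time over $\mathbb{Q}$.
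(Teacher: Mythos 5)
Your proposal is correct and follows essentially the same route as the paper: both rest on the theorem from \cite{D} that an endomorphism of $F_d({\mathfrak N}_c{\mathfrak A})$ is an automorphism precisely when it induces one on $F_d({\mathfrak A}^2)$, hence every $\pi$-preimage of a primitive element is primitive, and both absorb the kernel discrepancy into a single summand (you define $u_1:=f-(u_2+\cdots+u_\ell)$, the paper replaces $u_1$ by $u_1+w$ with $w=f-\sum u_i$ in the kernel — the same trick). Your write-up is merely a bit more explicit about the two inequality directions and about how the lifted automorphism is constructed on the free generators.
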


\begin{proof}
There is a canonical epimorphism $\pi:F_d({\mathfrak N}_c{\mathfrak A})\to F_d({\mathfrak A}^2)$.
Since an endomorphism of $F_d({\mathfrak N}_c{\mathfrak A})$ is an automorphism if and only if it induces an automorphism on
$F_d({\mathfrak A}^2)$, see \cite{D}, we obtain that $f\in F_d({\mathfrak N}_c{\mathfrak A})$ is primitive if and only if its image
$\pi(f)$ is primitive in $F_d({\mathfrak A}^2)$. Hence, if $\text{plength}_+(\pi(f))=\infty$ in $F_2({\mathfrak A}^2)$, then
$\text{plength}_+(f)=\infty$ in $F_2({\mathfrak N}_c{\mathfrak A})$. Now, let $\text{plength}_+(\pi(f))=\ell$ in $F_d({\mathfrak A}^2)$, $d>1$,
and $\pi(f)=\pi(u_1)+\cdots+\pi(u_{\ell})$, where $u_1,\ldots,u_{\ell}\in F_d({\mathfrak N}_c{\mathfrak A})$ and
$\pi(u_1),\ldots,\pi(u_{\ell})$ are primitive in $F_d({\mathfrak A}^2)$. Hence $u_1,\ldots,u_{\ell}$ are primitive in
$F_d({\mathfrak N}_c{\mathfrak A})$. Clearly,
\[
w=f-(u_1+\cdots+u_{\ell})\in F_d''({\mathfrak N}_c{\mathfrak A})
\]
and hence $u_1'=u_1+w$ is also primitive in $F_d({\mathfrak N}_c{\mathfrak A})$. In this way
$f=u_1'+u_2+\cdots+u_{\ell}$ and $\text{plength}_+(f)=\ell=\text{plength}_+(\pi(f))$.
\end{proof}

\section{Open problems}
In this section we shall collect some open problems concerning the additive primitive length for different relatively free algebras.

\begin{problem}
Is there an upper bound for $\text{\rm plength}_+(f)$, $f\in K[X_d]$, $d\geq 2$? In other words, does  $\text{\rm pwidth}_+(K[X_d])$ exist?
What happens when $\text{\rm char}(K)>0$?
\end{problem}

By a result of Shpilrain and Yu \cite{SY} the primitive elements of the free associative algebra $K\langle X_2\rangle$ are {\it palindromic},
i.e., they coincide with their images under the antiautomorphism of $K\langle X_2\rangle$ to the opposite algebra $K\langle X_2\rangle^{\text{opp}}$.
In other words they are the same if we read their monomials backwards, from right to the left.
Hence elements of $K\langle X_2\rangle$ which are not palindromic cannot be presented as sums of primitive.

\begin{problem}
Describe the elements of $K\langle X_2\rangle$ which can be presented as sums of primitive.
\end{problem}

Since the automorphism groups of $K[X_2]$ and $K\langle X_2\rangle$ are isomorphic, one can lift in a unique way every
automorphism of $K[X_2]$ to an automorphism of $K\langle X_2\rangle$, see the comments in \cite{SY}. Hence every decomposition
of $f\in K[X_2]$ into a sum of primitive polynomials can be lifted uniquely to a similar sum in $K\langle X_2\rangle$.
The main difficulty in the above problem is that the presentation of the elements of $K[X_2]$ as a sum of primitive is not unique.

\begin{problem}
What happens with the presentation into a sum of primitive in $K\langle X_d\rangle$, $d>2$?
\end{problem}

We may ask the same question for the free Lie algebra $L_d$. Since all automorphisms of $L_2$ are linear, the interesting case is $d>2$.

Let ${\mathfrak T}_c={\mathfrak N}_c\mathfrak A$ be the variety of associative algebras
defined by the polynomial identity $[x_1,x_2]\cdots[x_{2c-1},x_{2c}]=0$.
By a theorem of Maltsev \cite{M} in characteristic 0 and of Siderov \cite{S} and other authors for infinite fields of positive characteristic
this variety is generated by the algebra of $c\times c$ upper triangular matrices. As in the case of Lie algebras, see \cite{D},
the problem for the presentation of the elements of $F_d({\mathfrak T}_c)$, $c>2$, is reduced to the same problem for $F_d({\mathfrak T}_2)$.

\begin{problem}
Describe the elements of the relatively free associative algebra $F_d({\mathfrak T}_2)$ which can be presented as sums of primitive.
\end{problem}

Applying the canonical epimorphism $\pi:F_d({\mathfrak T}_2)\to K[X_d]$, when $\text{char}(K)=0$,
we can present $\pi(f)$, $f\in F_d({\mathfrak T}_2)$,
as a sum $\pi(f)=\pi(u_1)+\cdots+\pi(u_{\ell})$ of tame primitive elements and lift the presentation to $F_d({\mathfrak T}_2)$.
Hence the problem is reduced to the problem to  find a presentation as a sum of primitive elements
of the element $f-(u_1+\cdots+u_{\ell})$ in the commutator ideal of
$F_d({\mathfrak T}_2)$. This ideal has a basis
\begin{equation}\label{associative commutator elements}
x_1^{k_1}\cdots x_d^{k_d}[x_{i_1},x_{i_2},\ldots,x_{i_n}],\quad k_1,\ldots,k_d\geq 0, i_1>i_2\leq\cdots\leq i_n,n\geq 2,
\end{equation}
see e.g., \cite{D2}. When $n\geq 3$ we can handle the elements (\ref{associative commutator elements}) as in the case of metabelian Lie algebras
and the main difficulty is to find a presentation for the elements from (\ref{associative commutator elements}) when $n=2$.

\begin{problem}
One can ask similar questions for the additive primitive length for other relatively free algebras $F_d({\mathfrak V})$,
e.g., for the varieties of associative and Lie algebras generated by the $2\times 2$ matrix algebra, or for varieties of Jordan algebras.
\end{problem}

\end{document}